\theoremstyle{plain}
\newtheorem{thm}{Theorem}[section]
\newtheorem{lem}[thm]{Lemma}
\newtheorem{prop}[thm]{Proposition}
\newtheorem{cor}[thm]{Corollary}
\theoremstyle{definition}
\newtheorem{defi}[thm]{Definition}
\newtheorem{ex}[thm]{Example}
\newtheorem{exs}[thm]{Examples}
\newtheorem{qu}[thm]{Question}
\newtheorem{ntn}[thm]{Notation}
\theoremstyle{remark}
\newtheorem*{note}{Note}
\newtheorem{rmk}[thm]{Remark}
\DeclareMathOperator{\Tor}{Tor}
\DeclareMathOperator{\HH}{H}
\DeclareMathOperator{\Image}{Im}
\DeclareMathOperator{\pd}{pd}
\def\fm{{\mathfrak{m}}}
\def\Q{{\mathbb{Q}}}
\begin{document}

\title{A note on freeness}
\thanks{}

\subjclass[2020]{Primary 13A35, 13D05, 13J10, 13C10}

\date{\today}


\begin{abstract}
In this brief note we show that for a field extension $K/F$,
$S=K[\![\mathbf{x}]\!]$ is a free $R=F[\![\mathbf{x}]\!]$-module precisely when $K/F$ is finite.
We then raise the question \emph{what is the projective dimension of $S$?}
\end{abstract}

\maketitle

\setcounter{section}{-1}
\section{\bf Introduction}
\label{in}

Throughout this paper let $K$ be a field extension of $F$, let $\mathbf{x}=x_1, \dots, x_n$ be a sequence of variables,
let $R=F[\![\mathbf{x}]\!]$, and let $S=K[\![\mathbf{x}]\!]$. We let $\fm$ denote the maximal ideal $\mathbf{x}R$.

In this note we prove that $S$ is $R$-free if and only if $K$ is a finite extension of $F$.

Our motivation originates from the study of regular rings of prime characteristic where a fundamental theorem by Ernst Kunz (\cite[Theorem 2.1]{Kunz1969})
states that a local ring $A$ of prime characteristic $p$ is regular if and only if it is reduced and $A$ is a flat $A^p$-module. The proof of that theorem
reduced to proving that if, with our notation, $F=K^p$, then $S$ is a flat $R$-module. When $K$ is a finite extension of $F=K^p$ (the \emph{$F$-finite} case) it is straightforward to show
that $S$ is $R$-free, (and hence $R$-flat), but in general it was not known whether this is the case in general.
We settle this:
a corollary of our main theorem is that $K[\![\mathbf{x}]\!]$ is free over $(K[\![\mathbf{x}]\!])^p$
precisely in the $F$-finite case.

\section{When is $S$ a free $R$-module?}

For any $f\in R$ and $d\geq 0$ define $\delta_d(f)$ to be the summand of degree $d$ in $f$.
Throughout this section $\Lambda$ will denote an index set.
We say that a sequence
$\left( f_\lambda \right)_{\lambda\in\Lambda}$ of elements in $R$
has \emph{degree-wise finite support} if
for all $d\geq 0$,
$\{ \lambda \in \Lambda : \delta_d(f_\lambda) \neq 0 \}$
is finite.

\begin{lem}
Let $T$ be a $R$-free module with free basis
$\{ e_\lambda \}_{\lambda\in \Lambda}$.
\begin{enumerate}

\item[(a)] The completion $\widehat{T}$ of $T$ consists of
the elements of $\prod_{\lambda\in\Lambda} R e_\lambda$
of degree-wise finite support, and

\item[(b)] $\widehat{T}=T$ if and only if $\Lambda$ is finite.
\end{enumerate}
\end{lem}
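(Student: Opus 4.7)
The plan is to compute the $\fm$-adic completion $\widehat T=\varprojlim_k T/\fm^k T$ directly from the given free basis. Since $T=\bigoplus_{\lambda\in\Lambda} R e_\lambda$, we have $\fm^k T=\bigoplus_{\lambda\in\Lambda} \fm^k e_\lambda$, and therefore
\[ T/\fm^k T=\bigoplus_{\lambda\in\Lambda} (R/\fm^k)\,e_\lambda. \]
An element of $\widehat T$ is thus a compatible family $\left(\sum_{\lambda\in\Lambda} \bar r_\lambda^{(k)} e_\lambda\right)_k$ in which, for each fixed $k$, only finitely many of the $\bar r_\lambda^{(k)}\in R/\fm^k$ are nonzero. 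Since $R$ itself is $\fm$-adically complete, so that $\varprojlim_k R/\fm^k=R$, for each $\lambda\in\Lambda$ the compatible sequence $(\bar r_\lambda^{(k)})_k$ assembles into a unique element $r_\lambda\in R$.

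To finish (a), I translate the level-wise support condition into a degree-wise one: $\bar r_\lambda^{(k)}\ne 0$ iff $r_\lambda\notin \fm^k$ iff $\delta_d(r_\lambda)\ne 0$ for some $d<k$. Hence requiring $\{\lambda : \bar r_\lambda^{(k)}\ne 0\}$ to be finite for every $k$ is the same as requiring $\{\lambda : \delta_d(r_\lambda)\ne 0\}$ to be finite for every $d\geq 0$, which is exactly the degree-wise finite support condition. Conversely, any family $(r_\lambda)_\lambda\in \prod_{\lambda}R e_\lambda$ with degree-wise finite support yields a compatible system by reduction modulo $\fm^k$, providing the reverse inclusion.

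For (b), if $\Lambda$ is finite then $T\cong R^{|\Lambda|}$ is a finite direct sum of complete modules, hence itself complete, so $\widehat T=T$. If $\Lambda$ is infinite, pick a countable sequence $\lambda_1,\lambda_2,\ldots$ of distinct elements of $\Lambda$ and consider $\sum_{i\geq 1} x_1^i e_{\lambda_i}$: at each degree $d$ only the indices $i\leq d$ contribute, so by (a) this element lies in $\widehat T$, yet it has infinitely many nonzero coordinates and is not in $T=\bigoplus_\lambda R e_\lambda$. The only genuinely nontrivial step is the bookkeeping in (a) between the level-wise inverse-limit description and the degree-wise support condition; once (a) is in hand, (b) is essentially immediate.
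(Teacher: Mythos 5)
Your proof is correct and follows essentially the same route as the paper: identify $\widehat T$ with the inverse limit of the direct sums $\bigoplus_\lambda (R/\fm^k)e_\lambda$, characterize its elements by the level-wise finiteness condition (which you correctly translate into the degree-wise support condition), and for (b) exhibit the element $\sum_i x_1^i e_{\lambda_i}$ over a countable subfamily, which is exactly the paper's counterexample. Your write-up is somewhat more explicit than the paper's in reassembling the coordinates $r_\lambda$ via completeness of $R$ and in handling the finite case of (b), but the argument is the same.
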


\begin{proof}
The elements of
\[
\widehat{T}=\lim_{\substack{\longleftarrow\\d}}\frac{ T}{\fm^d T}\cong
\lim_{\substack{\longleftarrow\\d}} \bigoplus_{\lambda\in\Lambda} \frac{R}{\fm^d} e_\lambda
\]
are precisely those $(f_\lambda)_{\lambda\in\Lambda}\in \prod_{\lambda\in\Lambda} R e_\lambda$ 
for which, for any $d\geq 0$,
the set $\{ \lambda\in \Lambda : f_\lambda\notin \fm^d \}$ is finite.

If $\Lambda$ is not finite, let $\iota: \mathbb{N} \to \Lambda$ be an injection
and let
\[
f_\lambda=
\left\{
\begin{array}{ll}
x_1^n e_\lambda & \text{if } \lambda=\iota(n) \text{ for some } n\in\mathbb{N}\\
0 & \text{otherwise}.
\end{array}
\right.
\]
Now $f$ has degree-wise finite support but $\{ \lambda\in \Lambda : f_\lambda\neq 0\}$ is not finite, so $f\notin T$.
\end{proof}

\begin{thm}\label{main theorem}
\noindent
\begin{enumerate}

\item[(a)] If $S$ is $R$-free, it has finite rank.

\item[(b)] $S$ is $R$-free if and only if $K$ is a finite extension of $F$.
\end{enumerate}

\end{thm}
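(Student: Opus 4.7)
The plan is to use the preceding lemma as the workhorse for part (a), and then to reduce part (b) to part (a) by passing modulo $\fm$.

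For part (a), the key observation is that $S$ is $\fm$-adically complete as an $R$-module, because $\fm S = \mathbf{x} S$ coincides with the maximal ideal of the complete local ring $S = K[\![\mathbf{x}]\!]$. Hence, if $S$ were $R$-free with basis $\{ e_\lambda \}_{\lambda \in \Lambda}$ and we set $T = \bigoplus_{\lambda\in\Lambda} R e_\lambda \cong S$, the identification $\widehat{T} = T$ together with part (b) of the lemma would force $\Lambda$ to be finite.

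For part (b), the implication ``$K/F$ finite $\Rightarrow$ $S$ is $R$-free'' is routine: if $k_1, \dots, k_m$ is an $F$-basis of $K$, then applying this expansion coefficient-by-coefficient to any power series in $S$ shows that $k_1, \dots, k_m$ is also an $R$-basis of $S$. Conversely, if $S$ is $R$-free, part (a) gives $S \cong R^m$ with $m$ finite; reducing modulo $\fm$ and using the identification $S/\fm S = K[\![\mathbf{x}]\!]/\mathbf{x} K[\![\mathbf{x}]\!] = K$ yields $K \cong F^m$ as $F$-vector spaces, so $[K:F] = m$.

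Given the lemma, no step is a genuine obstacle: the whole argument hinges on recognizing that $S$ is $\fm$-adically complete as an $R$-module and then invoking the lemma. The only minor subtlety is justifying the uniqueness of the $R$-linear expansion in the easy direction of (b), which amounts to a degree-wise rearrangement of the $F$-expansions of the coefficients of a power series in $S$.
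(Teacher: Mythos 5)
Your proposal is correct and follows essentially the same route as the paper: completeness of $S$ in the $\fm$-adic topology plus part (b) of the preceding lemma gives the finite rank, tensoring with $R/\fm$ identifies that rank with $[K:F]$, and a finite $F$-basis of $K$ serves directly as an $R$-basis of $S$. The extra detail you supply (why $S$ is $\fm$-adically complete as an $R$-module, and the degree-wise rearrangement justifying the easy direction of (b)) is exactly what the paper leaves implicit.
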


\begin{proof}
Since $S$ is complete, if it is $R$-free the previous Lemma implies that it has finite rank.

If $S$ is $R$-free, say $S\cong R^k$, tensoring with $R/\fm$ yields $K\cong F^k$ and so $[K:F]=k$.

If $B$ is a finite $F$-basis for $K$, it is also a free basis for $S$ as an $R$-module.
\end{proof}

\begin{rmk}
Note that $S$ is always $R$-flat: this follows from properties of general filtered modules, see
\cite[Chapter III, \S 5]{Bourbaki}.
\end{rmk}

\begin{cor}\label{Corollary 1.4}
Let $K$ be a field of prime characteristic $p$;
$K[\![\mathbf{x}]\!]$ is a free module over $(K[\![\mathbf{x}]\!])^p$
if and only if $[K : K^p]< \infty$.
\end{cor}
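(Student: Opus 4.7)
The plan is to reduce the corollary to Theorem \ref{main theorem} applied to the field extension $K/K^p$. The key setup is to identify
\[
(K[\![\mathbf{x}]\!])^p = K^p[\![x_1^p,\dots,x_n^p]\!],
\]
which via the substitution $y_i \leftrightarrow x_i^p$ is isomorphic to the ring $K^p[\![\mathbf{y}]\!]$. This places us exactly in the setting of the theorem with $F=K^p$ and variables $\mathbf{y}$; the intermediate ring $K[\![\mathbf{y}]\!]$ will serve as a bridge between $S^p$ and $S=K[\![\mathbf{x}]\!]$.

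For the implication $[K:K^p]<\infty \Rightarrow$ freeness, I would factor the extension $S^p \hookrightarrow S$ through $K[\![\mathbf{y}]\!]$. The outer step $K[\![\mathbf{y}]\!] \hookrightarrow K[\![\mathbf{x}]\!]$ is always free of finite rank $p^n$ with explicit basis $\{x^\gamma : 0\le\gamma_i<p\}$. The inner step $S^p \hookrightarrow K[\![\mathbf{y}]\!]$ is, after the identification above, the inclusion $K^p[\![\mathbf{y}]\!]\hookrightarrow K[\![\mathbf{y}]\!]$, which is free precisely when $[K:K^p]<\infty$ by Theorem \ref{main theorem}. Composing two free module structures gives freeness of $S$ over $S^p$.

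For the reverse direction, assume $S$ is $S^p$-free. I would apply the Lemma with $S^p$ playing the role of the base power series ring (its form $K^p[\![\mathbf{y}]\!]$ is exactly the one required). To invoke the Lemma one must check that $S$ is complete in the maximal-ideal-adic topology of $S^p$; this follows because $\fm_{S^p}S = (x_1^p,\dots,x_n^p)S$ and $\fm_S = (x_1,\dots,x_n)S$ have the same radical in $S$, so they define the same topology, and $S$ is $\fm_S$-adically complete. The Lemma then forces the rank $k$ of $S$ over $S^p$ to be finite. Reducing modulo $\fm_{S^p}$ computes this rank in two ways: on the one hand $S/\fm_{S^p}S \cong (K^p)^k$ has $K^p$-dimension $k$; on the other hand
\[
S/\fm_{S^p}S = K[x_1,\dots,x_n]/(x_1^p,\dots,x_n^p)
\]
has $K$-dimension $p^n$, hence $K^p$-dimension $p^n[K:K^p]$. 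Equating the two yields $[K:K^p]=k/p^n<\infty$.

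The main conceptual step is recognising that $S^p$ is itself a power series ring of the form covered by the theorem, so that both Theorem \ref{main theorem} and the Lemma apply to the pair $(S^p,S)$ without reproving anything. The only mildly technical point is the completeness verification in the reverse direction, which is immediate from the equality of radicals; everything else is bookkeeping with bases and dimension counts.
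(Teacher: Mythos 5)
Your proof is correct, and for the forward implication it is essentially the paper's argument in mirror image: the paper factors the extension as $(K[\![\mathbf{x}]\!])^p\subseteq K^p[\![\mathbf{x}]\!]\subseteq K[\![\mathbf{x}]\!]$ (first step free of rank $p^n$ on the monomials $x^\gamma$ with $0\le\gamma_i<p$, second step governed by Theorem \ref{main theorem} with $F=K^p$), whereas you factor through $K[\![x_1^p,\dots,x_n^p]\!]$, putting the ``field step'' first and the ``variable step'' second; the two decompositions are interchangeable. Where you genuinely diverge is the reverse implication. The paper disposes of both directions with the single phrase ``it is enough to show,'' which for the only-if direction tacitly requires that freeness of $S$ over $(K[\![\mathbf{x}]\!])^p$ descend to freeness over the intermediate ring --- a point that is not automatic and is left unjustified. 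You instead argue directly: identify $(K[\![\mathbf{x}]\!])^p$ with $K^p[\![\mathbf{y}]\!]$, check that $S$ is complete for the $\fm_{S^p}$-adic topology (your radical argument is fine, since the two ideals squeeze powers of each other in the Noetherian ring $S$), invoke the Lemma to force finite rank $k$, and then count $\dim_{K^p} S/\fm_{S^p}S$ two ways to get $k=p^n[K:K^p]$. This costs a little more writing but buys a reverse direction that is airtight and self-contained, and it recycles the Lemma exactly as the proof of Theorem \ref{main theorem}(a) does; if anything, your version patches a small expository gap in the paper's proof.
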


\begin{proof}
Since  $K^p[\![\mathbf{x}]\!]$ is a free $(K[\![\mathbf{x}]\!])^p$-module
(with free basis $\{ x_1^{i_1} \dots x_n^{i_n} : 0\leq i_1, \dots, i_n <p\}$)
it is enough to show that $K[\![\mathbf{x}]\!]$ is a free module over $K^p[\![\mathbf{x}]\!]$
if and only if $[K : K^p]< \infty$, and this follows from  Theorem \ref{main theorem}.
\end{proof}

\section{The projective dimension of $S$}

Throughout this section we assume that $K/F$ is an infinite
extension.
Recall that projective modules over local rings are free
(\cite[Theorem 2.5]{Matsumura}), and so our assumption implies $\pd_R S>0$.

\begin{qu}
What is the projective dimension of $S$?
\end{qu}

Work by Barbara L.~Osofsky and others
on related problems (e.g., \cite{Osofsky})
suggests that answering  this question in general might
involve
going down deep set-theoretical rabbit holes.
As an example of this we prove the following.

\begin{thm}
Assume that $K$ is countable and that
$2^{\aleph_0}$ is the $m$th infinite cardinal $\aleph_m$.
Then $\pd_R S\leq m+1$.
\end{thm}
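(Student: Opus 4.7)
The plan is to convert the cardinality of $S$ into a bound on its number of $R$-generators, and then appeal to the classical Osofsky / Jensen / Gruson--Raynaud principle bounding the projective dimension of flat modules by the cardinality of their generating sets. Since $K$ is countable and $\mathbf{x}=(x_1,\dots,x_n)$ is a finite tuple, each element of $S=K[\![\mathbf{x}]\!]$ is specified by a function $\mathbb{N}^n\to K$, whence
\[
|S|=|K|^{\aleph_0}=\aleph_0^{\aleph_0}=2^{\aleph_0}=\aleph_m,
\]
so $S$ is generated as an $R$-module by at most $\aleph_m$ elements. Combined with the flatness of $S$ over $R$ noted in the remark after Theorem~\ref{main theorem}, the theorem then reduces to the following cardinality bound: if $A$ is a commutative Noetherian ring and $M$ is a flat $A$-module admitting a generating set of cardinality $\aleph_n$, then $\pd_A M\le n+1$. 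Applying this with $(A,M,n)=(R,S,m)$ gives $\pd_R S\le m+1$.

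The cardinality bound itself is proved by transfinite induction on $n$. For $n=0$, that is, for a countably generated flat module $M$, Lazard's theorem presents $M$ as a countable filtered colimit $M=\varinjlim_i M_i$ of finitely generated free $A$-modules; the standard telescope short exact sequence
\[
0\longrightarrow\bigoplus_i M_i\longrightarrow\bigoplus_i M_i\longrightarrow M\longrightarrow 0
\]
has free outer terms and so yields $\pd_A M\le 1$. For the inductive step, one constructs a continuous well-ordered chain $\{M_\alpha\}_{\alpha<\omega_n}$ of pure $A$-submodules of $M$, each generated by at most $\aleph_{n-1}$ elements, with $M=\bigcup_\alpha M_\alpha$. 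Purity forces every $M_\alpha$ and every successive quotient $M_{\alpha+1}/M_\alpha$ to be flat; the inductive hypothesis then gives $\pd\le n$ for each piece, and a standard extension-and-continuous-limit argument bounds $\pd_A M$ by $n+1$.

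The main obstacle is the inductive step, specifically the construction of the chain of pure $\aleph_{n-1}$-generated submodules with flat successive quotients. This is a delicate set-theoretic closure argument carried out inside the flat module, and is exactly the kind of set-theoretic construction foreshadowed in the discussion preceding the theorem. Once the general principle is in hand, the specialisation to $(R,S)$ is nothing more than the cardinality calculation in the first paragraph.
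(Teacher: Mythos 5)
Your global strategy is essentially the paper's: everything reduces to the cardinality computation $|S|=\aleph_0^{\aleph_0}=2^{\aleph_0}=\aleph_m$ followed by an appeal to the classical set-theoretic bound on the projective dimension of flat modules. The paper bounds the cardinality of the directed index set of a Lazard presentation $S=\varinjlim F_i$ and quotes \cite[Theorem 2.44]{Osofsky}; you bound the number of generators and quote the equivalent Jensen/Gruson--Raynaud corollary. Had you simply cited that corollary the proof would be complete, and your base case $n=0$ is fine over a Noetherian ring, where a countably generated flat module is countably presented and the telescope argument applies.

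The gap is in your sketched proof of the inductive step. A continuous chain $\{M_\alpha\}_{\alpha<\omega_n}$ of pure, $\aleph_{n-1}$-generated submodules with union $M$ cannot exist in general. If it did, each successive quotient $M_{\alpha+1}/M_\alpha$ would be flat and $\aleph_{n-1}$-generated, hence of projective dimension at most $n$ by your inductive hypothesis, and Auslander's lemma --- which for a continuous filtration gives $\pd_A M\le\sup_\alpha\pd_A(M_{\alpha+1}/M_\alpha)$ with no ``$+1$'' --- would yield $\pd_A M\le n$. For $n=1$ this contradicts Osofsky's theorem that, under the Continuum Hypothesis, the $\aleph_1$-generated flat $\mathbb{R}[x,y]$-module $\mathbb{R}(x,y)$ has projective dimension $2$. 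The source of the trouble is that purifying a submodule requires closing it under solutions of all solvable finite systems of linear equations with coefficients in $A$, of which there are $|A|$ many; here $|R|=2^{\aleph_0}=\aleph_m$, so pure submodules cannot be kept $\aleph_{n-1}$-generated for $n-1<m$. The correct induction --- the one behind \cite[Theorem 2.44]{Osofsky}, which the paper invokes --- filters the directed \emph{index set} of the Lazard system by directed subsets of smaller cardinality and applies the telescope/mapping-cone construction for direct limits; that is where the genuine ``$+1$'' per cardinal step comes from.
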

\begin{proof}
Since $S$ is flat over $R$,
\cite[Th\'{e}or\`{e}me 1.2]{Lazard}
implies that $S$ is a direct limit
of free $R$-submodules of
finite rank.
The cardinality of the set of free $R$-submodules of
finite rank is bounded by the cardinality of the
set of finite subsets of $S$, which equals the cardinality of
$S$, namely $\aleph_0^{\aleph_0}=2^{\aleph_0}=\aleph_m$.
Now the result follows from \cite[Theorem 2.44]{Osofsky}.
\end{proof}

\begin{ex}\label{Example 2.3}
As an example of a set-theoretical rabbit-hole note that
one can now conclude that
\[
 \pd_{\Q[\![\mathbf{x}]\!]} \Q(t)[\![\mathbf{x}]\!]\in\{1,2\},
\]
if we assume the Continuum Hypothesis!
\end{ex}

\section*{Acknowledgements}

I thank Mohsen Asgharzadeh for spotting a problem in the original version of Example \ref{Example 2.3}. I also thank an anonymous referee for referring me to an exercise in \cite{Bourbaki} which implies Corollary \ref{Corollary 1.4}.

\bibliographystyle{amsplain}

\end{document}